\newtheorem{thm}{Theorem}[section]
\newtheorem{lemma}[thm]{Lemma}
\newtheorem{prop}[thm]{Proposition}
\newtheorem{fact}[thm]{Fact}
\theoremstyle{definition}
\theoremstyle{remark}
\renewcommand{\r}{\mathbb{R}}
\newcommand{\Z}{\mathbb{Z}}
\newcommand{\n}{\mathbb{N}}
\renewcommand{\to}{\rightarrow}
\def \<{\langle}
\def \>{\rangle}
\def \*Z {{{^*}\Z}}
\def \((  {(\!(}
\def \)) {)\!)}
\numberwithin{equation}{section}
\def \u{\mathcal U}
\def\indsym#1#2{%
  \setbox0=\hbox{$\m@th#1x$}%
  \kern\wd0%
  \hbox to 0pt{\hss$\m@th#1\mid$\hbox to 0pt{$\m@th#1^{#2}$}\hss}%
  \lower.9\ht0\hbox to 0pt{\hss$\m@th#1\smile$\hss}%
  \kern\wd0}
\def\nindsym#1#2{%
  \setbox0=\hbox{$\m@th#1x$}%
  \kern\wd0%
  \hbox to 0pt{\hss$\m@th#1\not$\kern1.4\wd0\hss}
  \hbox to 0pt{\hss$\m@th#1\mid$\hbox to 0pt{$\m@th#1^{\,#2}$}\hss}%
  \lower.9\ht0\hbox to 0pt{\hss$\m@th#1\smile$\hss}%
  \kern\wd0}
\newcommand{\cstar}{$\mathrm{C}^*$}
\title{Boundary amenability of groups via ultrapowers}
\author{Stephen Avsec and Isaac Goldbring}
\thanks{Goldbring's work was partially supported by NSF CAREER grant DMS-1349399.}
\email{stephen.avsec@gmail.com}
\address {Department of Mathematics, University of California, Irvine, Irvine, CA, 92697-3875.}
\email{isaac@math.uci.edu}
\urladdr{http://www.math.uci.edu/~isaac}
\begin{document}

\begin{abstract}
We use \cstar-algebra ultrapowers to give a new construction of the Stone-Cech compactification of a separable, locally compact space.  We use this construction to give a new proof of the fact that groups that act isometrically, properly, and transitively on trees are boundary amenable.
\end{abstract}
%\begin{abstract}
%We prove that Thompson's group is boundary amenable.
%\end{abstract}

\maketitle

\section{Introduction}\label{amen}

Suppose that the discrete group $\Gamma$ acts continuously on a compact space $X$.  We say that the action of $\Gamma$ on $X$ is \emph{amenable} if there is a net of continuous functions $x\mapsto \mu_n^x:X\to P(\Gamma)$ such that, for all $\gamma\in \Gamma$, we have $$\sup_{x\in X}\|\gamma \cdot \mu_n^x-\mu_n^{\gamma\cdot  x}\|_1\to 0.$$  We say that $\Gamma$ is \emph{boundary amenable} if $\Gamma$ acts amenably on some compact space.  Note that amenable groups are precisely the groups that act amenably on a one-point space, whence they are boundary amenable.  A prototypical example of a boundary amenable group that is not amenable is any non-abelian finitely generated free group.  Boundary amenable groups are sometimes referred to as exact groups for the reduced group \cstar-algebra $\mathrm{C}^*_r(\Gamma)$ is exact (meaning that the functor $\otimes_{\min}\mathrm{C}^*_r(\Gamma)$ is exact) if and only if $\Gamma$ is boundary amenable.

In this note, we show how one can construct the Stone-Cech compactification of a separable, locally compact space using \cstar-algebra ultrapowers.  When applied to the case of a tree, this construction gives a very natural proof of the fact that a group that acts isometrically, properly, and transitively on a tree is boundary amenable.  It was our initial hope that this construction could be used to settle the boundary amenability of groups where the answer was unknown (most notably \emph{Thompson's group}) but we have thus far been unsuccessful (although remain optimistic).  The na\"ive idea behind our optimism is that groups such as Thompson's group ``almost'' act isometrically on a tree and it is often the case that ultrapower constructions can turn almost phenomena into exact ones.

In Section 2, we explain the needed background on groups acting on \cstar-algebras as well as ultrapowers of \cstar-algebras.  In Section 3, we explain our main construction in the general setting of separable, locally compact spaces.  Finally, in Section 4 we use our construction to prove the boundary ameanbility of groups acting isometrically, properly, and transitively on trees.

\section{Preliminaries}

\subsection{Boundary amenability of groups acting on \cstar-algebras}

We will verify that certain groups act amenably on a compact space by checking that the group acts amenably on a unital abelian \cstar-algebra as we now explain.  Suppose that $B$ is a unital \cstar-algebra and that $\Gamma$ acts on $B$.
We consider the space $C_c(\Gamma,B)$ of finitely supported functions $\Gamma\to B$.  $C_c(\Gamma,B)$ is naturally a $*$-algebra with respect to the convolution product
$$(f*g)(\gamma)=\sum_{\gamma_1\cdot \gamma_2=\gamma}f(\gamma_1)(\gamma_1\cdot g(\gamma_2))$$ and involution
$$f^*(\gamma)=\gamma\cdot f(\gamma^{-1})^*.$$  We also view $C_c(\Gamma,B)$ as a pre-Hilbert $B$-module with $B$-valued inner product $\langle f,g\rangle_B=\sum_{\gamma \in \Gamma}f(\gamma)^*g(\gamma)$ and corresponding norm $\|f\|_{B}:=\|\langle f,f\rangle_B\|^{-1/2}$.

Recall also that an action of $\Gamma$ on a compact space $X$ induces an action of $\Gamma$ on $C(X)$ by $(\gamma \cdot f)(x):=f(\gamma^{-1}x)$.

Our approach to showing that groups are boundary amenable is via the following reformulation of amenable actions (see \cite[Proposition 2.2]{kerr}).

\begin{fact}
The action $G\curvearrowright X$ is amenable if and only if there exists a net $T_i\in C_c(G,C(X))$ such that, for each $\gamma\in \Gamma$ and each $i$, we have:
\begin{enumerate}
\item $T_i(\gamma)\geq 0$;
\item $\langle T_i,T_i\rangle_{C(X)}=1$;
\item $\|T_i-\delta_\gamma*T_i\|_{C(X)}\to 0$.
\end{enumerate}
\end{fact}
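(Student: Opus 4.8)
The plan is to make explicit the dictionary between the operator-theoretic data $T\in C_c(\Gamma,C(X))$ and the measure-theoretic data appearing in the definition of an amenable action, and then to bridge the two via an elementary inequality comparing the $\ell^1$-norm of a difference of squares with the $\ell^2$-norm of the difference of the square roots. First I would identify each $T\in C_c(\Gamma,C(X))$ with the family $(\xi^x)_{x\in X}$ of finitely supported functions $\xi^x\in\ell^2(\Gamma)$ given by $\xi^x(\eta)=T(\eta)(x)$. Unwinding the definitions, condition (1) says exactly that each $\xi^x$ has nonnegative entries, and condition (2) says exactly that each $\xi^x$ is a unit vector, since $\langle T,T\rangle_{C(X)}(x)=\sum_\eta T(\eta)(x)^2=\|\xi^x\|_2^2$. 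Using the convolution formula together with the induced action $(\gamma\cdot f)(x)=f(\gamma^{-1}x)$, I would compute
\[
(\delta_\gamma * T)(\eta)(x)=T(\gamma^{-1}\eta)(\gamma^{-1}x)=(\lambda_\gamma\xi^{\gamma^{-1}x})(\eta),
\]
where $\lambda$ denotes the left regular representation $(\lambda_\gamma\zeta)(\eta)=\zeta(\gamma^{-1}\eta)$. After the substitution $x\mapsto\gamma x$ this shows that condition (3) is equivalent to $\sup_{x}\|\lambda_\gamma\xi^x-\xi^{\gamma x}\|_2\to 0$. Finally I would attach to each such $T$ the family of probability measures $\mu^x:=(\xi^x)^2$ (entrywise square); continuity of each $T(\eta)$ makes $x\mapsto\mu^x$ a continuous map into $P(\Gamma)$, and one checks $\gamma\cdot\mu^x=(\lambda_\gamma\xi^x)^2$ and $\mu^{\gamma x}=(\xi^{\gamma x})^2$.

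The bridge is the elementary inequality, valid for nonnegative $\xi,\eta\in\ell^2(\Gamma)$,
\[
\|\xi-\eta\|_2^2\ \le\ \|\xi^2-\eta^2\|_1\ \le\ \|\xi-\eta\|_2\,\|\xi+\eta\|_2,
\]
where the left bound uses $|\xi(\gamma)-\eta(\gamma)|\le|\xi(\gamma)+\eta(\gamma)|$ and the right bound is Cauchy--Schwarz. For the forward direction, given a net $T_i$ satisfying (1)--(3) I set $\mu_i^x=(\xi_i^x)^2$ and apply the right-hand inequality together with $\|\xi+\eta\|_2\le 2$ for unit vectors to obtain
\[
\sup_x\|\gamma\cdot\mu_i^x-\mu_i^{\gamma x}\|_1\ \le\ 2\sup_x\|\lambda_\gamma\xi_i^x-\xi_i^{\gamma x}\|_2\ \to\ 0,
\]
so the action is amenable.

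For the converse I would start from a net $\mu_n^x$ witnessing amenability and first reduce to finitely supported measures: since $X$ is compact and $x\mapsto\mu_n^x$ is continuous into $\ell^1(\Gamma)$, its image is norm-compact, so there is a finite $F_n\subseteq\Gamma$ with $\sum_{\eta\notin F_n}\mu_n^x(\eta)<\epsilon_n$ uniformly in $x$; truncating to $F_n$ and renormalizing yields continuous $\tilde\mu_n^x\in P(\Gamma)$ supported in $F_n$ with $\sup_x\|\mu_n^x-\tilde\mu_n^x\|_1\to 0$, and since $\Gamma$ acts isometrically on $\ell^1(\Gamma)$ the defect $\sup_x\|\gamma\cdot\tilde\mu_n^x-\tilde\mu_n^{\gamma x}\|_1\to 0$ is preserved. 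I then define $T_n(\eta)(x):=\sqrt{\tilde\mu_n^x(\eta)}$; this lies in $C_c(\Gamma,C(X))$ because $\sqrt{\cdot}$ is continuous and the support is contained in $F_n$, and conditions (1) and (2) are immediate. Condition (3) then follows from the left-hand inequality, which gives $\sup_x\|\lambda_\gamma\xi_n^x-\xi_n^{\gamma x}\|_2^2\le\sup_x\|\gamma\cdot\tilde\mu_n^x-\tilde\mu_n^{\gamma x}\|_1\to 0$.

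I expect the main obstacle to be the converse direction, specifically the uniform truncation to finitely supported measures: the square root itself is harmless (it is continuous, so $x\mapsto\sqrt{\tilde\mu_n^x(\eta)}$ stays in $C(X)$), but one must use compactness of $X$ to produce a single finite support $F_n$ valid for all $x$ simultaneously, and then carefully verify that neither the renormalization nor the passage between $\lambda_\gamma$ and the action $\gamma\cdot$ destroys the asymptotic intertwining.
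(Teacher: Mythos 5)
The paper does not actually prove this Fact: it is imported verbatim from the literature (the text points to \cite[Proposition 2.2]{kerr}), so there is nothing internal to compare against. Your argument is correct and is, in substance, the standard proof of that cited result: the dictionary $T\leftrightarrow(\xi^x)_x$ is set up consistently with the paper's convolution formula (indeed $(\delta_\gamma*T)(\eta)=\gamma\cdot T(\gamma^{-1}\eta)$, so $(\delta_\gamma*T)(\eta)(x)=T(\gamma^{-1}\eta)(\gamma^{-1}x)$, matching the computation used later in Section 4), and the two-sided inequality $\|\xi-\eta\|_2^2\le\|\xi^2-\eta^2\|_1\le\|\xi-\eta\|_2\|\xi+\eta\|_2$ for nonnegative vectors is exactly the right bridge in both directions. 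The only point I would ask you to make explicit is the topology on $P(\Gamma)$ in the converse: your uniform tail bound needs the image of the compact space $X$ under $x\mapsto\mu_n^x$ to be norm-compact in $\ell^1(\Gamma)$, which is immediate if continuity is in the $\ell^1$-norm but requires a remark if only pointwise (weak${}^*$) continuity is assumed --- on $P(\Gamma)$ these topologies coincide (a Scheff\'e-type argument: for probability densities on a discrete set, pointwise convergence implies $\ell^1$ convergence), so the argument survives, but as written this is a silent assumption. With that caveat addressed, the proof is complete and could stand in place of the citation.
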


In our proofs below, we will have an action of a group $\Gamma$ on a unital, abelian \cstar-algebra $B$ and we will prove that there exist $T_i\in C_c(\Gamma,B)$ satisfying the clauses (1)-(3) in the aforementioned fact.  By Gelfand theory, $B$ is isomorphic to $C(X)$ for some compact space $X$.  It remains to observe that Gelfand theory respects the group action, meaning that we obtain an induced action of $\Gamma$ on $X$ such that the corresponding action of $\Gamma$ on $C(X)$ ``is'' the corresponding action of $\Gamma$ on $B$.  Thus, our criterion for boundary amenability of a group is the following:

\begin{fact}\label{test}
A group $\Gamma$ is boundary amenable if and only if there is a unital, abelian \cstar-algebra $B$ and a net $T_i\in C_c(G,B)$ such that, for each $\gamma\in \Gamma$ and each $i$, we have:
\begin{enumerate}
\item $T_i(\gamma)\geq 0$;
\item $\langle T_i,T_i\rangle_B=1$;
\item $\|T_i-\delta_\gamma*T_i\|_{B}\to 0$.
\end{enumerate}
\end{fact}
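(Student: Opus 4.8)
The plan is to deduce Fact \ref{test} from the preceding reformulation of amenable actions (Kerr's criterion) together with Gelfand--Naimark duality, so that the substantive content is purely the transport of the group action across the Gelfand isomorphism. I would prove the two implications separately, noting at the outset that the direction used in applications is the one asserting that the existence of such a $B$ and net forces boundary amenability.

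The easy implication is the following: if $\Gamma$ is boundary amenable, then by definition it acts amenably on some compact space $X$, so $B:=C(X)$ is a unital abelian \cstar-algebra carrying the induced action $(\gamma\cdot f)(x)=f(\gamma^{-1}x)$, and the preceding (Kerr) Fact directly supplies a net $T_i\in C_c(\Gamma,B)$ satisfying clauses (1)--(3). This amounts to unwinding definitions. For the converse, suppose we are handed a unital abelian \cstar-algebra $B$ with a $\Gamma$-action by $*$-automorphisms $\alpha_\gamma$ and a net $T_i\in C_c(\Gamma,B)$ obeying (1)--(3). I would let $X$ be the spectrum of $B$; by Gelfand--Naimark, $X$ is compact Hausdorff and the Gelfand transform $b\mapsto\hat b$, $\hat b(\chi)=\chi(b)$, is a $*$-isomorphism $B\cong C(X)$. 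Dualizing the action functorially, I would set $\gamma\cdot\chi:=\chi\circ\alpha_{\gamma^{-1}}$ for a character $\chi$, which gives a genuine (left) action $\Gamma\curvearrowright X$ by homeomorphisms; since $\Gamma$ is discrete, continuity of the action reduces to each $\gamma$ acting homeomorphically, which is automatic because each $\alpha_\gamma$ is a $*$-automorphism. The crucial bookkeeping step is to verify that the Gelfand isomorphism is $\Gamma$-equivariant, i.e.\ that it intertwines the given action on $B$ with the action $(\gamma\cdot f)(x)=f(\gamma^{-1}x)$ on $C(X)$; this is the short computation $(\gamma\cdot\hat b)(\chi)=\hat b(\gamma^{-1}\cdot\chi)=\chi(\alpha_\gamma(b))=\widehat{\alpha_\gamma(b)}(\chi)$, valid precisely when the dual action is defined with the inverse as above. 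Transporting the net $T_i$ across this equivariant isomorphism yields a net in $C_c(\Gamma,C(X))$ still satisfying (1)--(3) verbatim, whence the Kerr Fact gives that $\Gamma\curvearrowright X$ is amenable and so $\Gamma$ is boundary amenable.

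The main (indeed only) obstacle is the variance and equivariance bookkeeping in the converse direction: one must confirm that the dual assignment $\gamma\mapsto(\chi\mapsto\chi\circ\alpha_{\gamma^{-1}})$ is an honest action rather than an anti-action, and that it matches the convention $(\gamma\cdot f)(x)=f(\gamma^{-1}x)$ under which the Kerr Fact is stated, so that clauses (1)--(3) are preserved under the isomorphism without any reindexing. Once the inverse is placed correctly this is immediate, and everything else is the cited reformulation and the standard Gelfand--Naimark theorem.
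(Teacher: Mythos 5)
Your proposal is correct and follows essentially the same route as the paper, which likewise derives this fact from the preceding Kerr criterion together with Gelfand duality and the observation that the Gelfand isomorphism intertwines the two actions. The only difference is that you spell out the variance bookkeeping for the dual action on the spectrum, which the paper compresses into the remark that ``Gelfand theory respects the group action.''
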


\subsection{Ultrapowers of $C^*$ algebras}\label{ultra}

Recall that a nonprincipal ultrafilter $\u$ on $\n$ is a $\{0,1\}$-valued measure on \emph{all} subsets of $\n$ such that finite sets get measure $0$.  We usually identify a nonprincipal ultrafilter with its collection of measure 1 sets, whence we write $A\in\u$ to indicate that $A$ has measure $1$.  If $P(n)$ is a property of natural numbers, we say that $P(n)$ holds $\u$-almost everywhere if the set of $n$ for which $P(n)$ holds belongs to $\u$.  If $(r_n)$ is a bounded sequence of real numbers, then the \emph{ultralimit of $(r_n)$ with respect to $\u$}, denoted $\lim_{n,\u} r_n$ or even $\lim_\u r_n$, is the unique real number $r$ such that, for every $\epsilon>0$, we have $|r_n-r|<\epsilon$ $\u$-almost everywhere.

Suppose that $A$ is a unital \cstar-algebra and $\u$ is a nonprincial ultrafilter on $\n$.  We can define a seminorm $\|\cdot \|_\u$ on $\ell^\infty(A)$ by setting $\|(f_n)\|_\u:=\lim_\u \|f_n\|$.  We set $A^\u$ to be the quotient of $\ell^\infty(A)$ by those elements of $\|\cdot\|_\u$-norm $0$; we refer to $A^\u$ as the \emph{ultrapower of $A$ with respect to $\u$}.  It is well known that $A^\u$ is once again a unital \cstar-algebra.  For $(f_n)\in \ell^\infty(A)$, we let $(f_n)^\bullet$ denote its image in $A^\u$.  The canonical \emph{diagonal embedding} $\Delta:A\to A^\u$ is given by $\Delta(a)=(a)^\bullet$.

\section{The main construction}

In this section, we consider a second countable, locally compact space $X$ with fixed basepoint $o\in X$.  It is well-known that $X$ admits a compatible proper metric $d$ (see \cite[Theorem 2]{V}), and we fix such a metric in the rest of this section.  For $r\in \r^{>0}$, we set $B(r)$ to be the closed ball of radius $r$ around $o$.

We set $A=C_o(X)$, the space of complex-valued continuous functions on $X$ that vanish at infinity.  For $(f_n)\in \ell^\infty(A)$, we say that $(f_n)$ is \emph{$\u$-equicontinuous on bounded sets} if, for every $r,\epsilon>0$, there is $\delta>0$ such that, for $\u$-many $n$, we have for all $s,t\in B(r)$ with $d(s,t)<\delta$, that $|f_n(s)-f_n(t)|\leq \epsilon$.

Given any $(f_n)\in \ell^\infty(A)$, set $f_\u:X\to \mathbb{C}$ by $f_\u(t):=\lim_\u f_n(t)$.  Note that $f_\u$ is a bounded function.  The following lemma is quite routine and left to the reader.

\begin{lemma}
If $(f_n)$ is $\u$-equicontinuous on bounded sets, then $f_\u$ is uniformly continuous on bounded sets.
\end{lemma}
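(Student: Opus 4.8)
The plan is to unwind the two definitions directly. We are given $(f_n)$ is $\u$-equicontinuous on bounded sets, and we want to show $f_\u$ is uniformly continuous on bounded sets, where $f_\u(t) = \lim_\u f_n(t)$. So fix $r, \epsilon > 0$; our goal is to produce a single $\delta > 0$ witnessing uniform continuity of $f_\u$ on $B(r)$.

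First I would apply the hypothesis with the target accuracy $\epsilon/3$ (leaving room for two ultralimit approximations) to obtain a $\delta > 0$ such that the set $N = \{n : |f_n(s) - f_n(t)| \leq \epsilon/3 \text{ for all } s,t \in B(r) \text{ with } d(s,t) < \delta\}$ belongs to $\u$. Now fix any $s, t \in B(r)$ with $d(s,t) < \delta$. The point is that $f_\u(s) = \lim_\u f_n(s)$ and $f_\u(t) = \lim_\u f_n(t)$, so by the definition of ultralimit the sets $\{n : |f_n(s) - f_\u(s)| < \epsilon/3\}$ and $\{n : |f_n(t) - f_\u(t)| < \epsilon/3\}$ are each in $\u$.

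The key move is that a nonprincipal ultrafilter is closed under finite intersection, so the intersection of these two sets with $N$ is again in $\u$, hence nonempty. Pick any $n$ in this intersection. For that single $n$ we have simultaneously $|f_\u(s) - f_n(s)| < \epsilon/3$, $|f_n(s) - f_n(t)| \leq \epsilon/3$, and $|f_n(t) - f_\u(t)| < \epsilon/3$, so the triangle inequality gives $|f_\u(s) - f_\u(t)| < \epsilon$. Since $s,t$ were arbitrary in $B(r)$ with $d(s,t) < \delta$, this establishes uniform continuity of $f_\u$ on $B(r)$, and since $r$ was arbitrary, on every bounded set.

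There is essentially no obstacle here; the only subtlety worth flagging is the bookkeeping on the $\epsilon$'s — one must split into thirds so that the two ultralimit approximation errors plus the equicontinuity bound sum to at most $\epsilon$, and one must invoke finite intersection closure of $\u$ to guarantee a common index $n$ exists. This is exactly the routine verification the authors defer to the reader, and the three-set intersection argument is the whole content.
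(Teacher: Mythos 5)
Your proof is correct and is exactly the routine verification the paper leaves to the reader: split $\epsilon$ into thirds, use equicontinuity for the middle term and the definition of the ultralimit for the outer two, and use the finite-intersection property of $\u$ to find a single index $n$ realizing all three bounds at once. Nothing further is needed.
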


\begin{lemma}
Suppose that $(f_n)^\bullet=(g_n)^\bullet$ and $(f_n)$ is $\u$-equicontinuous on bounded sets.  Then so is $(g_n)$.
\end{lemma}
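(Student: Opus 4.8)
The plan is to run a standard three-$\epsilon$ argument, exploiting the fact that equality in the ultrapower $A^\u$ is exactly uniform-in-$n$ closeness along $\u$. First I would unpack the hypothesis $(f_n)^\bullet=(g_n)^\bullet$: by the construction of $A^\u$ in Section~\ref{ultra}, this means $\|(f_n)-(g_n)\|_\u=0$, i.e. $\lim_\u\|f_n-g_n\|=0$. Since the norm on $A=C_o(X)$ is the supremum norm, this says precisely that, for every $\eta>0$, we have $\sup_{t\in X}|f_n(t)-g_n(t)|<\eta$ for $\u$-many $n$. In particular the difference $f_n-g_n$ is uniformly small over \emph{all} of $X$ (not merely over $B(r)$) on a set of indices $n$ belonging to $\u$.

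Now fix $r,\epsilon>0$; I want to produce a single $\delta>0$ witnessing the $\u$-equicontinuity of $(g_n)$ at these parameters. Applying the $\u$-equicontinuity of $(f_n)$ with the same $r$ but with $\epsilon/3$ in place of $\epsilon$ yields $\delta>0$ such that the set
\[
S_1=\{\,n : \forall s,t\in B(r),\ d(s,t)<\delta \Rightarrow |f_n(s)-f_n(t)|\leq \epsilon/3\,\}
\]
lies in $\u$. Applying the observation of the previous paragraph with $\eta=\epsilon/3$ gives that
\[
S_2=\{\,n : \|f_n-g_n\|<\epsilon/3\,\}
\]
also lies in $\u$. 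Because $\u$ is a filter, $S_1\cap S_2\in\u$, and this is the set of $n$ I will use.

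The estimate itself is then immediate from the triangle inequality: for $n\in S_1\cap S_2$ and any $s,t\in B(r)$ with $d(s,t)<\delta$,
\[
|g_n(s)-g_n(t)|\leq |g_n(s)-f_n(s)|+|f_n(s)-f_n(t)|+|f_n(t)-g_n(t)|\leq \tfrac{\epsilon}{3}+\tfrac{\epsilon}{3}+\tfrac{\epsilon}{3}=\epsilon,
\]
where the outer two terms are each bounded by $\|f_n-g_n\|<\epsilon/3$ (the supremum norm dominating the pointwise values at $s$ and $t$) and the middle term is bounded by the defining property of $S_1$. Thus the same $\delta$ that worked for $(f_n)$ witnesses the $\u$-equicontinuity of $(g_n)$ at $(r,\epsilon)$, and no shrinking of $\delta$ is required.

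There is no genuine obstacle here; the only point demanding care is the bookkeeping of the two ``$\u$-many $n$'' clauses. The entire content of the lemma is that these two clauses can be satisfied simultaneously, which is guaranteed by closure of the ultrafilter $\u$ under finite intersection — the same reason one should commit to the common threshold $\epsilon/3$ before extracting $\delta$, rather than afterward.
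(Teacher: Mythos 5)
Your proof is correct and follows essentially the same route as the paper's: the paper also fixes $\delta$ witnessing $\u$-equicontinuity of $(f_n)$ for $\epsilon/3$ and bounds $|g_n(s)-g_n(t)|$ by $2\|g_n-f_n\|+|f_n(s)-f_n(t)|\leq \epsilon$, which is exactly your three-term triangle inequality with the outer terms combined. The only difference is that you make the intersection of the two $\u$-large index sets explicit, which the paper leaves implicit.
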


\begin{proof}
Fix $r,\epsilon>0$.  Take $\delta>0$ that witnesses $\u$-equicontinuity of $(f_n)$ on $B(r)$ for $\epsilon/3$.  Then for $\u$-many $n$, we have, for $s,t\in B(r)$ with $d(s,t)<\delta$, that
$$|g_n(s)-g_n(t)|\leq 2\|g_n-f_n\|+|f_n(s)-f_n(t)|\leq\frac{2\epsilon}{3}+\frac{\epsilon}{3}=\epsilon.$$
\end{proof}

\noindent The previous lemma allows us to consider the \emph{continuous part of $A^\u$}
$$A^{c\u}:=\{(f_n)^\bullet \in A^\u \ : \ (f_n) \text{ is $\u$-equicontinuous on bounded sets}\}.$$

\begin{lemma}

\

\begin{enumerate}
\item $A^{c\u}$ is a \cstar-subalgebra of $A^\u$.  
\item $\Delta(A)\subseteq A^{c\u}$.
\end{enumerate}
\end{lemma}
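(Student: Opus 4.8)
The plan is to verify the two claims separately, both by reducing to the definition of $\u$-equicontinuity on bounded sets and using the fact that $A^\u$ is already known to be a unital \cstar-algebra. For part (1), I would first observe that $A^{c\u}$ is a well-defined subset of $A^\u$: by the previous lemma, $\u$-equicontinuity on bounded sets is independent of the choice of representative $(f_n)$, so the defining property really is a property of elements of $A^\u$ rather than of sequences. It then suffices to check that $A^{c\u}$ is closed under the algebraic operations, contains the unit, and is norm-closed.

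For the algebraic closure, I would fix two elements $(f_n)^\bullet, (g_n)^\bullet \in A^{c\u}$ and check each operation. Closure under addition and scalar multiplication is immediate from the triangle inequality applied to the $\u$-equicontinuity estimates. For the adjoint, since $A = C_o(X)$ is abelian the involution is pointwise complex conjugation, and $|\overline{f_n(s)} - \overline{f_n(t)}| = |f_n(s) - f_n(t)|$, so equicontinuity is preserved verbatim. The one genuinely computational step is multiplication: here I would use the standard product trick
$$|f_n(s)g_n(s) - f_n(t)g_n(t)| \leq |f_n(s)||g_n(s)-g_n(t)| + |g_n(t)||f_n(s)-f_n(t)|,$$
bound $|f_n(s)|$ and $|g_n(t)|$ by the uniform bounds $\|f_n\|, \|g_n\| \leq M$ coming from $(f_n),(g_n)\in\ell^\infty(A)$, and then choose $\delta$ to handle both difference terms simultaneously using the $\u$-equicontinuity of each factor for $\epsilon/(2M)$; intersecting the two measure-one sets of indices keeps us $\u$-almost everywhere. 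The unit of $A^\u$ is represented by a constant sequence, which is trivially $\u$-equicontinuous, so it lies in $A^{c\u}$.

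The step I expect to require the most care is norm-closedness, since $A^{c\u}$ must be a closed subalgebra to be a \cstar-subalgebra. Given a sequence of elements of $A^{c\u}$ converging in $A^\u$-norm to some $(f_n)^\bullet$, I would run a standard $\epsilon/3$ argument: approximate $(f_n)^\bullet$ to within $\epsilon/3$ in norm by an element $(h_n)^\bullet$ of $A^{c\u}$, use $\u$-equicontinuity of $(h_n)$ to control $|h_n(s)-h_n(t)|$ on $B(r)$, and absorb the two approximation terms $|f_n(s)-h_n(s)|$ and $|f_n(t)-h_n(t)|$, each bounded $\u$-almost everywhere by the norm estimate $\lim_\u\|f_n - h_n\| < \epsilon/3$. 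The subtlety is that the norm approximation only controls the relevant quantities $\u$-almost everywhere rather than for all $n$, but since a finite intersection of measure-one sets has measure one, the three estimates can be combined on a single measure-one set of indices.

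Finally, part (2) is essentially immediate: for $a \in A = C_o(X)$, the diagonal embedding gives $\Delta(a) = (a)^\bullet$, the image of the constant sequence. Since $a$ is continuous and vanishes at infinity, its restriction to the compact ball $B(r)$ is uniformly continuous for every $r$, so a single $\delta$ works for every $n$ simultaneously, which certainly suffices for $\u$-equicontinuity. Hence $\Delta(a) \in A^{c\u}$.
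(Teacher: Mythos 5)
Your proof is correct and follows essentially the same route as the paper's: the paper dismisses the $*$-subalgebra verification as ``clear'' (your product-rule estimate with the uniform bound $M$ is exactly the content of that step), proves norm-closedness by the same $\epsilon/3$ argument you describe, and derives (2) from the compactness of the balls $B(r)$, hence uniform continuity of each $a\in A$ on them. One small correction: since $A=C_o(X)$ is not unital when $X$ is non-compact, $A^\u$ has no unit for you to locate in $A^{c\u}$; fortunately a \cstar-subalgebra need not contain a unit, so that extraneous step should simply be deleted.
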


\begin{proof}
For (1), it is clear that $A^{c\u}$ is a $*$-subalgebra of $A^\u$.  We must show that $A^{c\u}$ is closed in $A^\u$.  Towards this end, suppose that $(f_n^m)^\bullet$ is a sequence in $A^{c\u}$ such that $(f_n^m)^\bullet\to (h_n)^\bullet$ as $n\to \infty$.  We need to show that $(h_n)^\bullet\in A^{c\u}$.  Fix $r,\epsilon>0$.  Choose $m\in \n$ such that $\|(f_n^m)^\bullet-(h_n)^\bullet\|<\epsilon/3$.  Suppose that $s,t\in B(r)$ are such that $d(s,t)<\delta$.  Then we have that, for $\u$-many $n$, that
$$|h_n(s)-h_n(t)|\leq 2\|f_n^m-h_n\|+|f_n^m(s)-f_n^m(t)|\leq \frac{2\epsilon}{3}+\frac{\epsilon}{3}=\epsilon.$$

(2) follows from the fact that balls $B(r)$ in $X$ are compact, whence elements of $A$ are uniformly continuous on such balls.
\end{proof}

\noindent We now consider
$$I:=\{(f_n)^\bullet \in A^{\u} \ : \ (\exists r_n\in \r) (\lim_\u r_n=+\infty \text{ and } f_n|B(o,r_n)\equiv 0)\}.$$
It is clear from the definition that $I\subseteq A^{c\u}$.
%$$I_0:=\{(f_n)^\bullet \in A^\u \ : \ (\exists r_n\in \r) (r_n \nearrow +\infty \wedge f_n|B(o,r_n)\equiv 0)\}.$$

%Set $I$ to be the norm-closure of $I_0$ in $A^\u$; then $I$ is a closed ideal in $A^\u$.  We set $q:A^\u\to A^\u/ I$ to be the quotient map.

In the rest of this section, we fix continuous functions $\chi_n:X\to \r$ such that:
\begin{enumerate}
\item $0\leq \chi_n\leq 1$;
\item $\chi_n(t)=0$ for $t\in B(n)$;
\item $\chi_n(t)=1$ when $d(t,o)\geq n+1$.
\end{enumerate}
\begin{prop}

\

\begin{enumerate}
\item $I$ is a closed ideal in $A^{\u}$.
\item $A^{c\u}/I$ is unital.
\item $q\circ \Delta:A\to A^{c\u}/I$ is injective, where $q:A^{c\u}\to A^{c\u}/I$ is the canonical quotient map.
\item $(q\circ \Delta)(A)$ is an essential ideal in $A^{c\u}/I$.
\end{enumerate}
\end{prop}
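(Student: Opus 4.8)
The unifying tool I would introduce is, for each $k\in\n$, the seminorm $\rho_k$ on $A^\u$ given by $\rho_k((f_n)^\bullet):=\lim_\u\sup_{t\in B(k)}|f_n(t)|$. This depends only on the class (two representatives differing by a null sequence agree in the limit) and satisfies $\rho_k(x)\le\|x\|$ together with subadditivity, so each $\rho_k$ is $1$-Lipschitz, hence continuous, on $A^\u$. The engine of the whole proposition is the characterization
$$I=\{x\in A^\u:\ \rho_k(x)=0\ \text{for all}\ k\in\n\}.$$
The inclusion ``$\subseteq$'' is immediate, since if a representative $f_n$ of $x$ vanishes on $B(o,r_n)$ with $\lim_\u r_n=+\infty$, then for fixed $k$ we have $r_n>k$, and thus $\sup_{B(k)}|f_n|=0$, for $\u$-many $n$. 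For ``$\supseteq$'' I would diagonalize: assuming $\rho_k(x)=0$ for all $k$, the sets $E_m:=\bigcap_{j\le m}\{n:\sup_{B(j)}|f_n|<1/j\}$ lie in $\u$ and decrease in $m$, so $s_n:=\max\{m\le n:n\in E_m\}$ satisfies $\lim_\u s_n=+\infty$; then $g_n:=f_n\cdot\chi_{s_n-1}$ vanishes on $B(s_n-1)$ while $\|f_n-g_n\|\le\sup_{B(s_n)}|f_n|<1/s_n\to_\u 0$, so $(g_n)^\bullet=x$ witnesses $x\in I$. Granting this, part (1) falls out at once: $I=\bigcap_k\rho_k^{-1}(0)$ is closed as an intersection of zero sets of continuous seminorms, and it is an ideal because $A^\u$ is abelian and $\rho_k(xy)\le\rho_k(x)\|y\|$.

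For part (2) I would first fix the $\chi_n$ to be uniformly (say $1$-)Lipschitz, which is possible and makes $(1-\chi_n)$ $\u$-equicontinuous on bounded sets, so that $u:=(1-\chi_n)^\bullet$ lies in $A^{c\u}$. Then $q(u)$ is the unit: for $x=(f_n)^\bullet\in A^{c\u}$ we have $ux-x=(-\chi_nf_n)^\bullet$, and since $\chi_n$ vanishes on $B(n)$ the sequence $\chi_nf_n$ vanishes on $B(o,n)$ with $\lim_\u n=+\infty$, whence $ux-x\in I$. For part (3), if $\Delta(a)\in I$ then $\sup_{B(k)}|a|=\rho_k(\Delta(a))=0$ for every $k$; as the proper metric makes the balls $B(k)$ exhaust $X$, this forces $a\equiv 0$, and $q\circ\Delta$ is injective (being an injective $*$-homomorphism, it is moreover isometric, so its image is closed).

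Part (4) is where the real work lies. To show $(q\circ\Delta)(A)$ is an ideal, given $a\in A$ and $x=(f_n)^\bullet\in A^{c\u}$ I would exhibit the function $af_\u\in A$, where $f_\u(t)=\lim_\u f_n(t)$: the first lemma of the section makes $f_\u$ uniformly continuous on bounded sets, hence continuous, and multiplying by the vanishing-at-infinity function $a$ keeps us in $C_o(X)$. It then suffices to prove $\Delta(a)x-\Delta(af_\u)=(a(f_n-f_\u))^\bullet\in I$, i.e. $\lim_\u\sup_{B(k)}|f_n-f_\u|=0$ for each $k$. This uniform $\u$-convergence on the ball $B(k)$ is the main obstacle and the one step genuinely using compactness of the balls: fixing $\epsilon$, choosing $\delta$ from $\u$-equicontinuity and from uniform continuity of $f_\u$, covering the compact $B(k)$ by finitely many $\delta$-balls about $t_1,\dots,t_p$, and intersecting the $\u$-many ``good'' $n$ coming from the finitely many pointwise limits $f_n(t_j)\to_\u f_\u(t_j)$, a routine $\epsilon/3$ argument gives $\sup_{B(k)}|f_n-f_\u|\le\epsilon$ for $\u$-many $n$. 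Finally, for essentiality I would compute the annihilator: if $y=q((f_n)^\bullet)$ satisfies $y\cdot q(\Delta(a))=0$ for all $a\in A$, then $(f_na)^\bullet\in I$, i.e. $\lim_\u\sup_{B(k)}|f_na|=0$, for all $a$ and $k$; choosing $a=1-\chi_k\in A$, which equals $1$ on $B(k)$, yields $\rho_k((f_n)^\bullet)=0$ for every $k$, so $(f_n)^\bullet\in I$ and $y=0$. Since the annihilator of $(q\circ\Delta)(A)$ is trivial, the ideal is essential.
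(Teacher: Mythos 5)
Your proof is correct, but it is organized around a device the paper does not use: the seminorms $\rho_k((f_n)^\bullet)=\lim_\u\sup_{t\in B(k)}|f_n(t)|$ and the characterization $I=\bigcap_k\rho_k^{-1}(0)$. The paper never isolates this; instead it reruns the underlying argument separately each time it is needed. Concretely: your ``$\supseteq$'' direction (the sets $E_m$, the diagonal index $s_n$, the truncation $g_n=f_n\chi_{s_n-1}$) is exactly the paper's proof that $I$ is closed, except that the paper applies it directly to a limit of elements of $I$, whereas you prove the characterization once and then get closedness for free from continuity of the $\rho_k$ and the ideal property from $\rho_k(xy)\le\rho_k(x)\|y\|$. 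The same dividend appears in the essentiality half of (4): the paper first shows $\lim_\u|f_n(t)|=0$ pointwise and then must invoke $\u$-equicontinuity and finite $\delta$-nets to upgrade this to uniform smallness on balls before repeating the truncation argument; your choice of the single test function $a=1-\chi_k$, which is identically $1$ on $B(k)$, gives $\rho_k((f_n)^\bullet)=\rho_k((f_na)^\bullet)=0$ directly and bypasses equicontinuity entirely in that step. Parts (2) and (3) and the ``ideal'' half of (4) (exhibiting $af_\u\in A$ and running the $\epsilon/3$ net argument on the compact ball) coincide with the paper's proof; your extra insistence that the $\chi_n$ be Lipschitz in (2) is unnecessary, since $1-\chi_n$ is constant on any fixed ball for all large $n$ and hence automatically $\u$-equicontinuous on bounded sets. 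The only points to tidy are cosmetic: define $s_n$ when $\{m\le n:n\in E_m\}$ is empty (any default value works, as it does not affect the $\u$-limit), and note that the boundedness of $a$ is what lets you pass from $\sup_{B(k)}|f_n-f_\u|\to_\u 0$ to $\rho_k((a(f_n-f_\u))^\bullet)=0$. Overall your route is a genuine streamlining: one lemma, proved once, drives (1), (3), and the essentiality in (4).
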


\begin{proof}
For (1), suppose $(f_n)^\bullet,(g_n)^\bullet\in I$, $(h_n)\in A^\u$, and $\lambda\in \mathbb C$.  Suppose that $f_n|B(r_n),g_n|B(s_n)\equiv 0$, where $\lim_\u r_n=\lim_\u s_n=0$.  Then $$\lambda f_n|B(r_n),(f_n+g_n)|B(\min(r_n,s_n)),(f_n\cdot h_n)|B(r_n)\equiv 0;$$ since $\lim_\u \min(r_n,s_n)=\infty$, we have $\lambda f_n,f_n+g_n,f_n\cdot h_n\in I$ and $I$ is an ideal.

We now prove that $I$ is closed.  Suppose that $((f_n^m)^\bullet \ : \ m\in \n)$ is a sequence from $I$ such that $\lim_m (f_n^m)^\bullet=(g_n)^\bullet$; we must show that $(g_n)^\bullet\in I$.  Suppose that $f_n^m|B(r_n^m)\equiv 0$ with $\lim_{n,\u}r_n^m=\infty$ for each $m$.  Fix $k\in \n$ and take $m\in \n$ such that $\|(f_n^m)^\bullet-(g_n)^\bullet\|<\frac{1}{k}$.  For $\u$-many $n$ we have $\|f_n^m-g_n\|<\frac{1}{k}$ and $r_n^m\geq k$.  Thus, if we set $$X_k:=\{n\in \n \ : n\geq k \text{ and } |g_n(t)|<\frac{1}{k} \text{ for }t\in B(k)\},$$ we have that $X_k\in \u$.  For $n\in \n$, set $l(n):=\max\{k\in \n \ : \ n\in X_k\}$.  Note that $n\in X_k$ implies that $l(n)\geq k$, whence $\lim_{n,\u}l(n)=\infty$.  Define $h_n:=f_n\cdot \chi_{l(n)-1}$.  Note that $(h_n)^\bullet \in I$; it remains to show that $(g_n)^\bullet=(h_n)^\bullet$.  For $n\in \n$, we have $\|g_n-h_n\|\leq \sup_{t\in B(l(n))}|g_n(t)|\leq \frac{1}{l(n)}$, whence
$$\|(g_n)^\bullet-(h_n)^\bullet\|=\lim_\u \|g_n-h_n\|\leq \lim_\u \frac{1}{l(n)}=0.$$

For (2), consider any sequence $(g_n)\in \ell^\infty(A)$ such that $g_n\equiv 1$ on $B(n)$.  (For example, take $g_n:=1-\chi_n$.)  We claim that $q(g_n)^\bullet$ is an identity for the larger algebra $A^{\u}/I$.  Indeed, consider arbitrary $q(f_n)^\bullet\in A^\u/I$.  Then $f_ng_n-f_n$ vanishes on $B(n)$, whence $(f_ng_n-f_n)^\bullet\in I$ and $q(f_ng_n)^\bullet=q(f_n)^\bullet$.

For (3), suppose that $(q\circ \Delta)(f)=0$.  Then there is $(g_n)^\bullet \in I$ such that $\Delta(f)=(g_n)^\bullet$.  Suppose that $g_n|B(r_n)\equiv 0$ with $\lim_\u r_n=\infty$.  Fix $t\in X$ and $\epsilon>0$.  Then for $\u$-many $n$, we have $\|f-g_n\|<\epsilon$ and $t\in B(r_n)$, whence $|f(t)|<\epsilon$.  Since $t$ and $\epsilon$ were arbitrary, we have that $f\equiv 0$.

We now prove (4).  We first show that $(q\circ \Delta)(A)$ is an ideal in $A^{c\u}/I$.  Towards this end, fix $f\in A$ and $q((g_n)^\bullet)\in A^{c\u}/I$; we must show that $q((fg_n)^\bullet) \in q(\Delta(A))$.  In fact, we will show that $q((fg_n)^\bullet)=q(\Delta(fg_\u))$.  Recall that $$\|q((fg_n)^\bullet)-q(\Delta(fg_\u))\|=\inf\{\lim_{\u} \|fg_n-fg_\u-h_n\| \ : \ (h_n)^\bullet \in I\}.$$  Set $M:=\sup_n \|g_n\|$.  Fix $\epsilon>0$.  Fix $m\in \n$ such that $|f(t)|<\frac{\epsilon}{2M}$ when $t\in B(m)^c$.  Let $\delta>0$ witness the $\u$-equicontinuity of $(g_n)$ on $B(m)$ with respect to $\frac{\epsilon}{3\|f\|}$ and fix a finite $\delta$-net $\{t_1,\ldots,t_k\}$ for $B(m)$.  Fix $U\in \u$ such that $\{k\in \n \ : \ k\geq m\}\subseteq U$ and $|g_n(t_i)-g_\u(t_i)|<\frac{\epsilon}{3\|f\|}$ for $i=1,\ldots,k$ and $n\in U$.  For $n\in U$, define $h_n\in A$ by $h_n:=(fg_n-fg_\u)\chi_n$.  (Define $h_n\in A$ for $n\notin U$ in an arbitrary fashion).  It suffices to show that $\lim_\u \|fg_n-fg_\u-h_n\|\leq\epsilon$.  Suppose $n\in U$.  First consider $t\in B(m)$.  Then $|fg_n(t)-fg_\u(t)-h_n(t)|=|fg_n(t)-fg_\u(t)|$.  Take $i$ such that $d(t,t_i)<\delta$.  Then, for $\u$-many $n$, we have $$|g_n(t)-g_\u(t)|\leq |g_n(t)-g_n(t_i)|+|g_n(t_i)-g_\u(t_i)|+|g_\u(t_i)-g_\u(t)|\leq\frac{\epsilon}{\|f\|},$$ whence $|fg_n(t)-fg_\u(t)|\leq\epsilon$.  Now suppose that $t\in B(m)^c\cap B(n+1)$.  Then $|fg_n(t)-fg_\u(t)-h_n(t)|\leq |fg_n(t)-fg_\u(t)|<\epsilon$ by choice of $m$.  If $t\in B(n+1)^c$, then $fg_n(t)-fg_\u(t)-h_n(t)=0$.  It follows that $\lim_\u\|fg_n-fg_\u-h_n\|\leq\epsilon$, finishing the proof that $(q\circ\Delta)(A)$ is an ideal in $A^{c\u}$.

We next show that $(q\circ \Delta)(A)$ is an essential ideal in $A^{c\u}/I$.  Suppose that $q(f_n)^\bullet\in A^{c\u}/I$ is such that $q(f_n)^\bullet\cdot q(a)^\bullet=0$ for all $a\in A$; we must show that $q(f_n)^\bullet=0$.
%The assumption is that $(f_na)^\bullet\in I$ for every $a\in A$, which means that, for every $\epsilon>0$, there is $(g_n)^\bullet \in I_0$ such that $\lim_\u \|f_na-g_n\|<\epsilon$.

Fix $t\in X$.  Fix $a\in A$ such that $a(t)=1$.  Then there is $(g_n)^\bullet\in I$ such that $\lim_\u \|f_na-g_n\|=0$.  For $\u$-most $n$, we have $t\in B(r_n)$, where $g_n$ vanishes on $B(r_n)$.  It thus follows that
$$\lim_\u |f_n(t)|\leq \lim_\u \|f_na-g_n\|=0.$$
%Since $\epsilon$ was arbitrary, we have that $\lim_\u |f_n(t)|=0$.

%Let $(t_k)$ be an enumeration of $T$ and
Set $$U_k:=\{n\in \n \ : \ n\geq k \text{ and } |f_n(t)|\leq \frac{1}{k} \text{ for} \ t\in B(k)\}.$$
We claim that $U_k\in \u$. Fix $\delta>0$ that witnesses $\u$-equicontinuity of $(f_n)$ on $B(k)$ with respect to $\frac{1}{2k}$.  Fix a finite $\delta$-net $F$ for $B(k)$.  Then for $\u$-most $n$, $|f_n(t)|\leq \frac{1}{2k}$ for $t\in F$.  Thus, given any $s\in B(k)$ and taking $t\in F$ such that $d(s,t)<\delta$, we have that $|f_n(s)|\leq |f_n(s)-f_n(t)|+|f_n(t)|\leq\frac{1}{k}$ for $\u$-most $n$. 

For $n\in \n$, set $l(n):=\max \{k\in \n \ : \ n\in U_k\}$.  For $n\in U_k$, we have $l(n)\geq k$, whence $\lim_\u l(n)=\infty$.  Define $h_n\in A$ by $h_n=f_n\cdot \chi_{l(n)-1}$.  As above, we have that $(h_n)^\bullet \in I$ and $\|f_n-g_n\|\leq \frac{1}{l(n)}$ whence $\lim_\u \|f_n-g_n\|\leq \lim_\u \frac{1}{l(n)}=0$.
\end{proof}

%\begin{lemma}
%$q(\Delta(A))$ is an essential ideal in $A^\u/I$.
%\end{lemma}
%
%\begin{proof}
%Suppose that $q(f_n)^\bullet\in A^\u/I$ is such that $q(f_n)^\bullet\cdot q(a)^\bullet=0$ for all $a\in A$; we must show that $q(f_n)^\bullet=0$.
%%The assumption is that $(f_na)^\bullet\in I$ for every $a\in A$, which means that, for every $\epsilon>0$, there is $(g_n)^\bullet \in I_0$ such that $\lim_\u \|f_na-g_n\|<\epsilon$.
%
%Fix $t\in T$.  Fix $a\in A$ such that $a(t)=1$.  Then there is $(g_n)^\bullet\in I$ such that $\lim_\u \|f_na-g_n\|=0$.  For $\u$-most $n$, we have $t\in B(0,r_n)$, where $g_n$ vanishes on $B(o,r_n)$.  It thus follows that
%$$\lim_\u |f_n(t)|\leq \lim_\u \|f_na-g_n\|=0.$$
%%Since $\epsilon$ was arbitrary, we have that $\lim_\u |f_n(t)|=0$.
%
%%Let $(t_k)$ be an enumeration of $T$ and
%Set $$U_k:=\{n\in \n \ : \ n\geq k \wedge |f_n(t)|\leq \frac{1}{k}, \ t\in B(o;k)\}\in \u.$$
%For $n\in \n$, set $l(n):=\max \{k\in \n \ : \ n\in U_k\}$.  For $n\in U_k$, we have $l(n)\geq k$, whence $\lim_\u l(n)=\infty$.  Define $h_n\in A$ by $h_n$ vanishes on $t_1,\ldots,t_{l(n)}$ and agrees with $f_n$ elsewhere.  Notice that $(h_n)^\bullet \in I$ and $\|f_n-g_n\|\leq \frac{1}{l(n)}$ whence $\lim_\u \|f_n-g_n\|\leq \lim_\u \frac{1}{l(n)}=0$.
%\end{proof}

Since $q(\Delta(A))$ is an essential ideal in the unital \cstar-algebra $A^{c\u}/I$, we see that $\Sigma(A^{c\u}/I)$ is a compactification of $X$, where $\Sigma(A^{c\u}/I)$ denotes the Gelfand spectrum of $A^{c\u}/I$.  It turns out that this compactification is indeed the Stone-Cech compactification of $X$.  Recall that $C_b(X)$ denotes the unital \cstar-algebra of bounded, continuous, complex-valued functions on $X$ and is natrually isomorphic to $C(\beta X)$, where $\beta X$ denotes the Stone-Cech compactification of $X$.

\begin{prop}
There is an isomorphism $\Phi:A^{c\u}/I\to C_b(X)$ such that $\Phi(q(\Delta(a))=a$ for all $a\in A$.
\end{prop}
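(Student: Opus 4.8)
The plan is to realize the isomorphism as the map that sends a class to its pointwise ultralimit. Define $\Phi_0\colon A^{c\u}\to C_b(X)$ by $\Phi_0((f_n)^\bullet):=f_\u$. To see that this is a well-defined $*$-homomorphism into $C_b(X)$, note first that $f_\u$ is bounded, with $\|f_\u\|_\infty\le\|(f_n)\|_\u$, and that by the first lemma of this section $f_\u$ is uniformly continuous on each bounded set; since every point of the proper metric space $X$ lies in the interior of some ball $B(r)$, this makes $f_\u$ continuous on all of $X$, so indeed $f_\u\in C_b(X)$. The map is independent of the chosen representative, because $\lim_\u\|f_n-g_n\|=0$ forces $|f_\u(t)-g_\u(t)|=\lim_\u|f_n(t)-g_n(t)|=0$ for every $t$; and it respects the $*$-algebra operations since ultralimits are linear and multiplicative and commute with complex conjugation.

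The heart of the argument, and the step I expect to be the main obstacle, is the identification $\ker\Phi_0=I$. One inclusion is immediate: if $f_n|B(r_n)\equiv 0$ with $\lim_\u r_n=\infty$, then any fixed $t$ lies in $B(r_n)$ for $\u$-many $n$, so $f_\u(t)=0$ and $I\subseteq\ker\Phi_0$. The reverse inclusion is where $\u$-equicontinuity is indispensable: one must upgrade the pointwise vanishing $f_\u\equiv 0$ to the uniform ``vanishing on balls of radius tending to infinity'' built into $I$. I would do this by the same net-and-diagonalization estimate appearing in the essentiality step of the preceding proposition: using a finite $\delta$-net for $B(k)$ one shows $U_k:=\{n\ge k : |f_n(t)|\le\frac1k\text{ for } t\in B(k)\}\in\u$, sets $l(n):=\max\{k : n\in U_k\}$ so that $\lim_\u l(n)=\infty$, and verifies that $h_n:=f_n\cdot\chi_{l(n)-1}$ satisfies $(h_n)^\bullet\in I$ and $\lim_\u\|f_n-h_n\|=0$, whence $(f_n)^\bullet=(h_n)^\bullet\in I$. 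Consequently $\Phi_0$ descends to an injective $*$-homomorphism $\Phi\colon A^{c\u}/I\to C_b(X)$.

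It then remains to prove surjectivity and the compatibility with the diagonal embedding. Given $g\in C_b(X)$, set $f_n:=g\cdot(1-\chi_n)$; each $f_n$ is continuous, equals $g$ on $B(n)$, and vanishes off $B(n+1)$, hence is compactly supported and lies in $A=C_o(X)$. For $n\ge r$ one has $f_n=g$ on $B(r)$, so the uniform continuity of $g$ on the compact ball $B(r)$ shows that $(f_n)$ is $\u$-equicontinuous on bounded sets, giving $(f_n)^\bullet\in A^{c\u}$; and since each fixed $t$ lies in $B(n)$ for all large $n$, we get $f_\u=g$, so $\Phi(q((f_n)^\bullet))=g$. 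Finally, for $a\in A$ the diagonal embedding gives $\Delta(a)=(a)^\bullet$, whose pointwise ultralimit is $a_\u=a$, so $\Phi(q(\Delta(a)))=a$, as required. Since a bijective $*$-homomorphism of \cstar-algebras is automatically isometric, the bijection $\Phi$ is the desired isomorphism.
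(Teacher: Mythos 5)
Your proof is correct and follows essentially the same route as the paper: the same map $(f_n)^\bullet\mapsto f_\u$, the same surjectivity argument via $f_n=(1-\chi_n)g$, and the same identification $\ker\Phi_0=I$. If anything, your treatment of the inclusion $\ker\Phi_0\subseteq I$ is more careful than the paper's, which truncates with the fixed cutoff $g_n=f_n\chi_{n-1}$ and deduces $(f_n)^\bullet=(g_n)^\bullet$ directly from the pointwise vanishing of $f_\u$ --- a step that really does require the finite $\delta$-net and $l(n)$-diagonalization argument you import from the essentiality part of the preceding proposition.
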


\begin{proof}
Define $\Phi:A^{c\u} \to C_b(X)$ by $\Phi((f_n)^\bullet):=f_\u$.  It is clear that $\Phi$ is a *-morphism.  We next observe that $\Phi$ is onto.  Indeed, given $f\in C_b(X)$ and $n>0$, define $f_n\in C_0(T)$ by $f_n=(1-\chi_n)f$.  Since $f$ is bounded, we have that $(f_n)\in \ell^\infty(A)$.  Since $X$ is proper, $f$ is uniformly continuous on bounded sets, whence $(f_n)$ is $\u$-equicontinuous on bounded sets, that is, $(f_n)^\bullet\in A^{c\u}$.  It is clear that $\Phi((f_n)^\bullet)=f$.

Now suppose that $(f_n)^\bullet\in I$.  Then by the definition of $I$, we have that $\Phi((f_n)^\bullet)=0$, so $\Phi$ induces a surjection $\Phi:A^{c\u}/I\to C_b(X)$.  Suppose now that $\Phi((f_n)^\bullet)=0$.  For each $n>0$, define a function $g_n\in A$ by $g_n=f_n\chi_{n-1}$.  It is clear that $(g_n)^\bullet\in I$.  Since $\lim_\u f_n(t)=0$ for all $t\in X$ and $\|f_n-g_n\|\leq\max_{t\in B(o,n)}|f_n(t)|$, it follows that $(f_n)^\bullet=(g_n)^\bullet$, whence $(f_n)^\bullet\in I$, thus proving that $\Phi:A^\u/I\to C_b(X)$ is an isomorphism.

Finally, it is clear from the definition of $\Phi$ that $\Phi(q(\Delta(a))=a$ for all $a\in A$.
\end{proof}

From now on, we set $B:=(A^{c\u}/I)/((q\circ \Delta)(A))$, a unital \cstar-algebra, and let $r:A^\u\to B$ denote the composition of $q$ with the quotient map $A^{c\u}/I\to B$.  Note that by the previous proposition, $B\cong C(X^*)$, where $X^*$ denotes the \emph{Stone-Cech remainder} $\beta X \setminus X$ of $X$.
 
We now introduce a group action into the picture:

\begin{lemma}
Suppose that $\Gamma$ acts isometrically on $X$. 
\begin{enumerate}
\item The induced action of $\Gamma$ on $A$ further induces an action of $\Gamma$ on $A^\u$ by $\gamma\cdot (f_n)^\bullet:=(\gamma\cdot f_n)^\bullet$.
\item Both $A^{c\u}$ and $I$ are invariant under the action of $\Gamma$ on $A^\u$ from (1).
\end{enumerate}
\end{lemma}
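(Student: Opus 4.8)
The plan is to treat the two parts separately, with part (1) being essentially a well-definedness check and part (2) being two invariance verifications that both hinge on the same elementary consequence of isometry.

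For part (1), the key observation is that each $\gamma\in\Gamma$ acts on $A=C_o(X)$ as a $*$-automorphism: since $(\gamma\cdot f)(x)=f(\gamma^{-1}x)$ and $x\mapsto\gamma^{-1}x$ is a homeomorphism of $X$, pre-composition preserves the supremum norm, so $\|\gamma\cdot f\|=\|f\|$ for all $f\in A$. First I would record that the map $\ell^\infty(A)\to\ell^\infty(A)$, $(f_n)\mapsto(\gamma\cdot f_n)$, is then an isometric $*$-automorphism of $\ell^\infty(A)$. In particular, if $\|(f_n)\|_\u=0$ then $\|(\gamma\cdot f_n)\|_\u=\lim_\u\|\gamma\cdot f_n\|=\lim_\u\|f_n\|=0$, so the formula $\gamma\cdot(f_n)^\bullet:=(\gamma\cdot f_n)^\bullet$ does not depend on the choice of representative and descends to a well-defined map on $A^\u$. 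The algebraic identities $e\cdot(f_n)^\bullet=(f_n)^\bullet$ and $(\gamma_1\gamma_2)\cdot(f_n)^\bullet=\gamma_1\cdot(\gamma_2\cdot(f_n)^\bullet)$, together with the fact that each $\gamma$ acts as a $*$-automorphism of $A^\u$, then transfer directly from the corresponding statements on $A$ applied representative-wise.

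For part (2), both invariance claims reduce to a single metric fact: because $\Gamma$ acts isometrically, the map $x\mapsto\gamma^{-1}x$ distorts distance to the basepoint by at most the constant $c_\gamma:=d(\gamma^{-1}o,o)$. Concretely, the triangle inequality gives $|d(\gamma^{-1}x,o)-d(x,o)|\leq c_\gamma$ for every $x\in X$, so $\gamma^{-1}$ maps $B(r)$ into $B(r+c_\gamma)$ while preserving distances between points. To show $A^{c\u}$ is invariant, fix $(f_n)$ that is $\u$-equicontinuous on bounded sets and fix $r,\epsilon>0$; I would apply the equicontinuity of $(f_n)$ on the larger ball $B(r+c_\gamma)$ with modulus $\epsilon$ to obtain $\delta>0$, and then for $s,t\in B(r)$ with $d(s,t)<\delta$ observe that $\gamma^{-1}s,\gamma^{-1}t\in B(r+c_\gamma)$ with $d(\gamma^{-1}s,\gamma^{-1}t)=d(s,t)<\delta$, whence $|(\gamma\cdot f_n)(s)-(\gamma\cdot f_n)(t)|=|f_n(\gamma^{-1}s)-f_n(\gamma^{-1}t)|\leq\epsilon$ for $\u$-many $n$; this is exactly $\u$-equicontinuity of $(\gamma\cdot f_n)$. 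To show $I$ is invariant, take $(f_n)^\bullet\in I$ with witnesses $r_n$ satisfying $\lim_\u r_n=+\infty$ and $f_n|B(r_n)\equiv0$, and set $s_n:=r_n-c_\gamma$. Since $x\in B(s_n)$ forces $\gamma^{-1}x\in B(s_n+c_\gamma)=B(r_n)$, we get $(\gamma\cdot f_n)(x)=f_n(\gamma^{-1}x)=0$ on $B(s_n)$, and $\lim_\u s_n=+\infty$ because $c_\gamma$ is a fixed constant, so $(\gamma\cdot f_n)^\bullet\in I$.

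The only genuinely non-formal point is the one just exploited: because the basepoint $o$ need not be fixed by $\Gamma$, translating a ball about $o$ by $\gamma^{-1}$ produces a ball about $\gamma^{-1}o$ rather than about $o$, and one must absorb the displacement $c_\gamma=d(\gamma^{-1}o,o)$ into the radius. I expect this to be the main (and essentially only) thing to get right; everything else is bookkeeping that follows representative-wise from the already-established structure of $A$ and $A^\u$. It is worth noting that $c_\gamma$ is finite simply because $d$ is a metric, so $\gamma^{-1}o$ lies at finite distance from $o$, which is all that the isometry hypothesis needs to supply here.
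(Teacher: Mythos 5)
Your proposal is correct and follows essentially the same route as the paper: well-definedness of the action on $A^\u$ via norm-preservation of pre-composition, and invariance of both $A^{c\u}$ and $I$ by absorbing the displacement $d(\gamma^{-1}o,o)$ into the radius of the relevant balls, exactly as in the paper's triangle-inequality computation. You actually spell out the $A^{c\u}$ invariance in more detail than the paper (which dispatches it in one sentence by noting isometries take bounded sets to bounded sets), but the underlying argument is identical.
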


\begin{proof}
For (1), we need to verify that, for $(f_n),(g_n)\in \ell^\infty(A)$, if $\lim_\u \|f_n-g_n\|=0$, then $\lim_\u \|\gamma\cdot f_n-\gamma\cdot g_n\|=0$.  However, this follows from the easy check that $\|\gamma\cdot f_n-\gamma\cdot g_n\|=\|f_n-g_n\|$ for each $n$.

We now prove (2).  The fact that $A^{c\u}$ is invariant under the action of $\Gamma$ follows from the fact that $\Gamma$ acts by isometries and thus takes bounded sets to bounded sets.  We now prove that $I$ is invariant under the action of $\Gamma$.  Consider $\gamma\in \Gamma$ and $(f_n)^\bullet \in I$; we must show that $(\gamma \cdot f_n)^\bullet \in I$.  Suppose that $f_n|B(o,r_n)\equiv 0$ where $\lim_\u r_n=\infty$.  Set $k:=d(\gamma^{-1}\cdot o,o)$.  Then for $r_n>k$, we have that $(\gamma\cdot f_n)|B(o,r_n-k)\equiv 0$:  if $t\in B(o,r_n-k)$, then
$$d(\gamma^{-1}t,o)\leq d(\gamma^{-1}t,\gamma^{-1}o)+d(\gamma^{-1}o,o)=d(t,o)+k\leq r_n,$$
whence $f_n(\gamma^{-1}t)=0$.  Since $r_n>k$ for $\u$-most $n$ and $\lim_u r_n-k=\infty$, it follows that $(\gamma \cdot f_n)^\bullet\in I$.
\end{proof}

By the previous lemma, we have an induced action of $\Gamma$ on $A^{c\u}/I$ by setting $\gamma \cdot q(f_n)^\bullet:=q(\gamma\cdot f_n)^\bullet$, whence we also get an action of $\Gamma$ on $B$ by setting $\gamma \cdot r(f_n)^\bullet:=r(\gamma\cdot f_n)^\bullet$.

%By Gelfand theory, we know that $B\cong C(X)$ for some compact space $X$.  In the remainder of this section, we show that $X$ is homeomorphic to $\beta T$, the Stone-Cech compactification of $T$.
%
%\begin{lemma}
%$(q\circ \Delta)(A)$ is an essential ideal in $A^\u/I$.
%\end{lemma}
%
%\begin{proof}
%Suppose that $q(f_n)^\bullet\in A^\u/I$ is such that $q(f_n)^\bullet\cdot q(a)^\bullet=0$ for all $a\in A$; we must show that $q(f_n)^\bullet=0$.  The assumption is that $(f_na)^\bullet\in I$ for every $a\in A$, which means that, for every $\epsilon>0$, there is $(g_n)^\bullet \in I_0$ such that $\lim_\u \|f_na-g_n\|<\epsilon$.  Fix $a\in A$ with $\|a\|=1$ and such that $a(t)\not=0$ for all $t\in T$.   Fix $n\in U$ such that $\|f_na-g_n\|<\epsilon$.
%\end{proof}

\section{Groups acting properly and isometrically on a tree}\label{proper}

In this section, our locally compact space is simply a tree $T$ given the usual path metric, namely $d(x,y)=$ the length of the shortest path connecting $x$ and $y$.  In this case, $A^{c\u}=A^\u$.  We further suppose that $\Gamma\curvearrowright T$ properly, isometrically and transitively.  (Recall that the action is proper if the map $(g,t)\mapsto (gt,t):G\times T\to T\times T$ is proper, meaning that inverse images of compact sets are compact.)  In this case, $\operatorname{Stab}(o)$ is finite, say of cardinality $m$.
For a point $t\in T$, we let $x_{[o,t]}$ denote the geodesic segment connecting $o$ and $t$.

\begin{thm}
If $\Gamma$ acts properly, transitively, and isometrically on a simplicial tree $T$, then $\Gamma$ is exact.
\end{thm}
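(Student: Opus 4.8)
The plan is to verify the criterion of Fact~\ref{test} taking for our abelian \cstar-algebra $B = (A^{c\u}/I)/((q\circ\Delta)(A))$, which by the two preceding propositions I may identify with $C_b(T)/C_0(T) \cong C(\beta T \setminus T)$, the induced $\Gamma$-action being $\gamma\cdot[g] = [g\circ\gamma^{-1}]$ for $g \in C_b(T)$. Thus it suffices to produce a net $(T_N)_{N\ge 1}$ in $C_c(\Gamma, B)$ satisfying clauses (1)--(3). For a vertex $v$ I write $\operatorname{sh}(v) := \{t \in T : v \in x_{[o,t]}\}$ for its shadow as seen from $o$, and let $\mathbf 1_{\operatorname{sh}(v)} \in C_b(T)$ denote the indicator; it is continuous away from the single vertex $v$ and may be harmlessly smoothed on the edge entering $v$ without changing its class modulo $C_0(T)$. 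Since the action is proper and transitive, $T$ is locally finite and $B(o,N)$ contains only finitely many vertices, each of which is $\gamma o$ for exactly $m = |\operatorname{Stab}(o)|$ elements $\gamma$.

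I would then set $T_N(\gamma) := \tfrac{1}{\sqrt{Nm}}\,[\mathbf 1_{\operatorname{sh}(\gamma o)}]$ if $1\le d(o,\gamma o)\le N$ and $T_N(\gamma) := 0$ otherwise. By the count above $T_N$ is finitely supported, so $T_N \in C_c(\Gamma, B)$, and clause (1) is immediate as each $T_N(\gamma)$ is the class of a nonnegative function. For clause (2), using that $\mathbf 1_{\operatorname{sh}(v)}$ is idempotent and grouping the $\gamma$ with $\gamma o = v$ in bunches of $m$, one gets
$$ \langle T_N, T_N\rangle_B = \frac1N\sum_{1\le d(o,v)\le N}\big[\mathbf 1_{\operatorname{sh}(v)}\big] = \Big[\tfrac1N\min(N, d(o,\cdot))\Big], $$
since $t$ lies in $\operatorname{sh}(v)$ for exactly the $\min(N,d(o,t))$ vertices $v\neq o$ of $x_{[o,t]}$ at distance $\le N$. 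This function equals $1$ off the bounded set $B(o,N)$, so its class in $C_b(T)/C_0(T)$ is the unit, giving clause (2).

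For clause (3), fix $\gamma$ and put $D := d(o,\gamma o)$. Since $(\delta_\gamma * T_N)(\gamma') = \gamma\cdot T_N(\gamma^{-1}\gamma')$ and $\gamma\cdot\mathbf 1_{\operatorname{sh}(w)} = \mathbf 1_{\gamma\operatorname{sh}(w)}$ with $\gamma\operatorname{sh}(\gamma^{-1}\gamma'o) = \{t : \gamma' o\in x_{[\gamma o, t]}\}$, the element $\delta_\gamma * T_N$ is simply $T_N$ recomputed with the basepoint $o$ replaced by $\gamma o$. Evaluating $\langle T_N - \delta_\gamma * T_N, T_N - \delta_\gamma * T_N\rangle_B$ as an honest function of $t$ and again bunching by $m$, the two diagonal terms contribute $\min(N,d(o,t))/N$ and $\min(N,d(\gamma o,t))/N$, while the cross term counts the vertices on $x_{[o,t]}\cap x_{[\gamma o,t]}$ within distance $N$ of both basepoints. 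Letting $c$ be the median of $o,\gamma o,t$ (so $d(o,c)+d(\gamma o,c) = D$), this common count is $N - \max(d(o,c),d(\gamma o,c)) + O(1) \ge N - D + O(1)$ once $d(o,t),d(\gamma o,t)\ge N$. Hence for all sufficiently distant $t$ the inner-product function is at most $2 - 2(1 - D/N) + O(1/N) = 2D/N + O(1/N)$, and taking the norm in $B$ (the limsup as $t\to\infty$) gives $\|T_N - \delta_\gamma * T_N\|_B^2 \le 2D/N + O(1/N) \to 0$.

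The main obstacle is clause (3): one must be sure that the ``finite part'' of $T$ — precisely where the basepoints $o$ and $\gamma o$ disagree — is exactly what is annihilated on passing to $B = C(\beta T\setminus T)$, so that at infinity $T_N$ and its translate differ only through the bounded overlap defect of the two geodesics to $t$. Making this rigorous is where the Stone--Cech-remainder construction of Section~3 replaces the usual Gromov boundary of $T$; the rest is the elementary tree geometry of medians sketched above, together with the bookkeeping check that smoothing the shadow indicators near finitely many vertices never alters any class modulo $C_0(T)$.
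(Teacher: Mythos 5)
Your proposal is correct and follows essentially the same route as the paper: both verify Fact~\ref{test} in $B\cong C(\beta T\setminus T)$ using kernels supported on the vertices of the geodesic $x_{[o,t]}$ within distance $N$ of the basepoint, with the normalization clause handled by the quotient modulo $C_0(T)$ and the almost-invariance clause reduced to counting the common vertices of $x_{[o,t]}$ and $x_{[\gamma o,t]}$ beyond the median. The only cosmetic differences are that you normalize by the constant $1/\sqrt{Nm}$ where the paper uses the pointwise factor $|X(i,t)|^{-1/2}$ (these agree off a bounded set, so the classes in $B$ coincide), and that you compute directly in $C_b(T)/C_0(T)$ rather than with ultrapower representatives, which the paper's Section~3 isomorphism makes equivalent.
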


\begin{proof}
For $t\in T$ and $i\in \n$, set $$X(i,t):=\{\gamma \in \Gamma \ : \ \gamma\cdot o\in B(i) \text{ and }\gamma\cdot o\in x_{[o,t]}\}$$ and $x(i,t)=|X(i,t)|^{-1/2}$.  Note that  $x(i,t)=m\cdot min(i,d(o,t))$.  Define $T_i^{(n)}:\Gamma\to A$ by
\[
T_i^{(n)}(\gamma)(t)=
\begin{cases}
x(i,t) 	&\text{if $t\in B(2n) \text{ and }\gamma \in X(i,t)$};\\
0 &\text{otherwise}.
\end{cases}
\]
Now define $T_i:\Gamma\to B$ by $T_i(\gamma):=r((T_i^{(n)}(\gamma)^\bullet)$.  We claim that these functions satisfy the criteria of Fact \ref{test}, whence the action of $\gamma$ on $X^*$ is amenable.

Certainly each $(T_i^{(n)}(\gamma))^\bullet$ is a positive element of $A^\u$; since $r$ is a \cstar-algebra homomorphism, we have that each $T_i(\gamma)\geq 0$ in $B$.

We now verify that $\langle T_i,T_i\rangle_B=1_B$; in other words, we must show that $\sum_{\gamma \in \Gamma}T_i(\gamma)^2=1_B$.  First observe that there is a finite $\Gamma_i\subseteq \Gamma$ such that $\sum_{\gamma \in \Gamma}T_i^{(n)}(\gamma)^2=\sum_{\gamma \in \Gamma_i}T_i^{(n)}(\gamma)^2=\chi_{B(2n)}$.  Since $(\chi_{B(2n)})^\bullet+I$ is the unit of $A^\u/I$, it follows that $r((\chi_{B(2n)})^\bullet)$ is the identity of $B$.  Now compute:
\begin{alignat}{2}
\sum_{\gamma \in \Gamma} T_i(\gamma)^2&=\sum_{\gamma \in \Gamma_i} T_i(\gamma)^2\notag \\ \notag
								&=\sum_{\gamma\in \Gamma_i}(r((T_i^{(n)}(\gamma))^\bullet)^2\notag \\
								&=\sum_{\gamma \in \Gamma_i}r(((T_i^{(n)}(\gamma))^\bullet)^2) \notag \\
								&=r(\sum_{\gamma \in \Gamma_i}((T_i^{(n)}(\gamma))^\bullet)^2))\notag \\
								&=r(\sum_{\gamma \in \Gamma_i}((T_i^{(n)}(\gamma))^2)^\bullet)) \notag \\
								&=r((\sum_{\gamma \in \Gamma_i}T_i^{(n)}(\gamma)^2)^\bullet)\notag \\
								&=r((\chi_{B(o,2n)})^\bullet)\notag \\
								&=1_B.\notag
\end{alignat}

It remains to prove that, for each $\gamma_1\in \Gamma$, we have $\lim_{i\to \infty} \|T_i-\delta_{\gamma_1}*T_i\|_2=0$.  It is straightforward to compute that $\delta_{\gamma_1}*T_i=\gamma_1\cdot T_i(\gamma_1^{-1}\gamma)$.  It follows that $\|T_i-\delta_{\gamma_1}*T_i\|_2^2$ is equal to
$$\|\sum_{\gamma \in \Gamma}(T_i(\gamma)^2+(\gamma_1\cdot T_i(\gamma_1^{-1}\gamma))^2-2T_i(\gamma)\gamma_1\cdot T_i(\gamma_1^{-1}\gamma))\|_B. \quad (\dagger)$$  Now $\gamma_1T_i^{(n)}(\gamma_1^{-1}\gamma)(t)=T_i^{(n)}(\gamma_1^{-1}\gamma)(\gamma_1^{-1}(t))$, which is only nonzero if:
\begin{enumerate}
\item $\gamma_1^{-1}t\in B(2n)$;
\item $\gamma_1^{-1}\gamma \cdot o\in B(i)$;
\item $\gamma_1^{-1}\gamma \cdot o\in x_{[o,\gamma_1^{-1}t]}$.
\end{enumerate}
Also notice that $\sum_{\gamma \in \Gamma}(\gamma_1\cdot T_i^{(n)}(\gamma_1^{-1}\gamma))^2=\chi_{\gamma_1\cdot B(2n)}$, so $(\dagger)$ equals
$$\|r(\chi_{B(2n)})^\bullet+r(\chi_{\gamma_1B(2n)})^\bullet -2\sum_{\gamma \in \Gamma}(T_i^{(n)}(\gamma)\gamma_1\cdot T_i^{(n)}(\gamma_1^{-1}\gamma))^\bullet)\|_B,$$ which in turn equals
$$\inf \{\lim_{n,\u}\|\chi_{B(2n)}+\chi_{\gamma_1\cdot B(2n)}-2\sum_{\gamma \in \Gamma}T_i^{(n)}(\gamma)\gamma_1\cdot T_i^{(n)}-g_n-a\|\}, \quad (\dagger \dagger)$$ where $(g_n)^\bullet$ ranges over $I$ and $a$ ranges over $A$.

Set
$$a(t)=(\chi_{B(2n)}+\chi_{\gamma_1\cdot B(2n)}-2\sum_{\gamma \in \Gamma}T_i^{(n)}(\gamma)\gamma_1\cdot T_i^{(n)})\cdot \chi_{B(i)\cup \gamma_1\cdot B(i)}.$$
Set $g_n(t)=\chi_{B(2n)\triangle \gamma_1\cdot B(2n)}$.  Finally set $$O(i,n)=(B(2n)\cap \gamma_1\cdot B(2n))\setminus (B(i)\cup \gamma_1\cdot B(i)).$$  Then $a\in A$, $(g_n)^\bullet\in I$ (as $g_n|B(n)\equiv 0$) and the value in $(\dagger \dagger)$ is bounded by
$$\lim_{n,\u}\sup_{t\in O(i,n)}|2-2\sum_{\gamma \in \Gamma}T_i^{(n)}(\gamma)\gamma_1\cdot T_i^{(n)})(t)|. \quad(\dagger \dagger \dagger)$$
Let $Z(i,t)$ be the set
$$\{\gamma \in \Gamma \ : \ \gamma\cdot o\in B(i), \gamma \cdot o\in x_{[o,t]}, \gamma_1^{-1}\gamma \cdot o\in B(i), \gamma_1^{-1}\gamma \cdot o\in x_{[o,\gamma_1^{-1}t]}\}.$$  Set $k:=d(\gamma_1\cdot o,o)$.  For $n$ sufficiently large and for $t\in O(i,n)$, we have $|Z(i,t)|=i-k$ and $2\sum_{\gamma\in \Gamma}(T_i^{(n)}(\gamma)\gamma_1\cdot T_i^{(n)})(t)=\frac{i-k}{im}$, whence the quantity appearing in $(\dagger \dagger \dagger)$ equals $2-2\frac{i-k}{im}$, which goes to $0$ as $i\to \infty$ as desired.
\end{proof}

\end{document}